\newcommand{\ct}{\mbox{{\sc c}}}
\newcommand{\ispa}[1]{\langle \,#1 \,\rangle }
\newcommand{\ol}{\overline}
\newcommand{\mb}{\mathbb}
\newcommand{\acal}{\mathcal{A}}
\newcommand{\hcal}{\mathcal{H}}
\newcommand{\ep}{\epsilon}
\newcommand{\re}{{\rm Re}\,}
\newcommand{\wlim}{\operatornamewithlimits{{\rm w}-lim}}
\newtheorem{thm}{{\sc Theorem}}
\newtheorem{lem}[thm]{{\sc Lemma}}
\newenvironment{example}{\medskip\noindent{\it Example:\/} }{$\square$\par\medskip}
\begin{document}

\title[An algebraic structure of one-dimensional quantum walks]
{An algebraic structure for one-dimensional quantum walks
and a new proof of the weak limit theorem}
\author{Tatsuya Tate}
\address{Graduate School of Mathematics, Nagoya University, 
Furo-cho, Chikusa-ku, Nagoya, 464-8602 Japan}
\email{tate@math.nagoya-u.ac.jp}
\address{{\it Current address}: Mathematical Institute, Graduate School of Sciences, Tohoku University, Aoba, Sendai 980-8578 Japan}
\email{tate@math.tohoku.ac.jp}
\thanks{The author is partially supported by JSPS Grant-in-Aid for Scientific Research (No. 21740117).}
\date{\today}

\renewcommand{\thefootnote}{\fnsymbol{footnote}}
\renewcommand{\theequation}{\arabic{equation}}
\renewcommand{\labelenumi}{{\rm (\arabic{enumi})}}
\renewcommand{\labelenumii}{{\rm (\alph{enumii})}}

\maketitle

\begin{abstract}
An algebraic structure for one-dimensional quantum walks is introduced. 
This structure characterizes, in some sense, one-dimensional quantum walks. 
A natural computation using this algebraic structure leads us to obtain an effective formula for the 
characteristic function of the transition probability. 
Then, the weak limit theorem for the transition probability of quantum walks is deduced 
by using simple properties of the Chebyshev polynomials. 
\end{abstract}



\section{Introduction}\label{INTRO}
The notion of quantum walks was introduced by Aharonov-Davidovich-Zagury\cite{ADZ} as a quantum analogue 
of classical random walks and then re-discovered in computer science. 
In particular, Ambainis-Kempe-Rivosh\cite{AKR} utilized two-dimensional quantum walks to improve Grover's quantum search algorithm. 
See \cite{Ke}, \cite{Ko2} for historical backgrounds and various aspects of quantum walks. 
Among numerous works, Konno\cite{Ko1} obtained a weak limit formula for 
transition probabilities of quantum walks on the one-dimensional integer lattice. 
In this short paper, we present a simple proof of Konno's weak limit theorem. 
First of all, let us state Konno's theorem. 
The quantum walks we consider in this paper is defined in terms of a two-by-two (special) unitary matrix
\begin{equation}\label{Amat}
A=
\begin{pmatrix}
a & b \\
-\ol{b} & \ol{a}
\end{pmatrix},\quad 
a,b \in \mb{C},\ |a|^{2}+|b|^{2}=1,  
\end{equation}
and its decomposition, 
\[
A=P_{A}+Q_{A},\quad 
P_{A}=
\begin{pmatrix}
a & 0 \\
-\ol{b} & 0
\end{pmatrix},\quad 
Q_{A}=
\begin{pmatrix}
0 & b \\
0 & \ol{a}
\end{pmatrix}. 
\]
Let $\ell^{2}(\mb{Z}) \otimes \mb{C}^{2}$ be the Hilbert space 
of square summable functions on $\mb{Z}$ with values in $\mb{C}^{2}$ whose inner product is given by 
\[
\ispa{f,g}=\sum_{x \in \mb{Z}}\ispa{f(x),g(x)}_{\mb{C}^{2}} \quad 
(f,g \in \ell^{2}(\mb{Z}) \otimes \mb{C}^{2}), 
\] 
where $\ispa{\cdot,\cdot}_{\mb{C}^{2}}$ denotes the standard inner product on $\mb{C}^{2}$. 
For any $u \in \mb{C}^{2}$ and $x \in \mb{Z}$, define $\delta_{x} \otimes u \in \ell^{2}(\mb{Z}) \otimes \mb{C}^{2}$ by 
\[
(\delta_{x} \otimes u)(y)=
\begin{cases}
u & (y=x),\\
0 & (y \neq x). 
\end{cases}
\]
Then, the vectors, $\delta_{x} \otimes {\bf e}_{i}$ $(i=1,2,\ x \in \mb{Z})$, where $\{{\bf e}_{1}, {\bf e}_{2}\}$ denotes the standard 
orthonormal basis in $\mb{C}^{2}$, form a complete orthonormal system in $\ell^{2}(\mb{Z}) \otimes \mb{C}^{2}$. 
The unitary evolution, $U(A)$, for the quantum walk is a unitary operator on $\ell^{2}(\mb{Z}) \otimes \mb{C}^{2}$ defined as 
\begin{equation}\label{qwalk}
U(A)=P_{A}\tau+Q_{A}\tau^{-1}, 
\end{equation}
where $\tau$ is the shift operator on $\ell^{2}(\mb{Z}) \otimes \mb{C}^{2}$ defined by $\tau(\delta_{x} \otimes u)=\delta_{x+1} \otimes u$. 
Since the operator $U(A)$ is unitary, the function 
\begin{equation}\label{df0}
p_{n}^{A}(\varphi;x):=\|U(A)^{n}(\delta_{0} \otimes \varphi)(x)\|_{\mb{C}^{2}}^{2} 
\quad (x \in \mb{Z}), 
\end{equation}
defines a probability distribution on $\mb{Z}$ for each positive integer $n$ and a unit vector $\varphi$ in $\mb{C}^{2}$.  
We call the distribution $p_{n}^{A}(\varphi;x)$ the transition probability of the quantum walk $U(A)$.\footnote{In \cite{Ko1}, the `row' decomposition of the given matrix $A$ is used. 
In \cite{ST}, the unitary operator $P_{A}\tau^{-1}+Q_{A}\tau$ is used. In each case, suitable change of the initial data or the change of the sign of the variable $x$ 
will give us our distribution.} 
In this context, Konno's weak limit theorem is stated as follows. 

\begin{thm}\label{KonnoWL}
Let $\varphi=\,^{t}(\varphi_{1},\varphi_{2})$ be a unit vector in $\mb{C}^{2}$. 
Suppose that the components, $a$, $b$, of the matrix $A$ given in $\eqref{Amat}$ are non-zero. 
Then, we have the following weak limit formula.  
\[
\wlim_{n \to \infty}\sum_{x \in \mb{Z}}p_{n}^{A}(\varphi;x)\delta_{x/n}=\chi_{(-|a|,|a|)}(y) \frac{|b|(1+\lambda_{A}(\varphi)y)}{\pi (1-y^{2}) \sqrt{|a|^{2}-y^{2}}}, 
\]
where $\delta_{x/n}$ denotes the Dirac measure at $x/n$, $\chi_{(-|a|,|a|)}(y)$ is the characteristic function on the interval $(-|a|,|a|)$ and the constant $\lambda_{A}(\varphi)$ is given by 
\[
\lambda_{A}(\varphi)=|\varphi_{1}|^{2}-|\varphi_{2}|^{2}-\frac{1}{|a|^{2}}(ab\ol{\varphi}_{1}\varphi_{2}+\ol{a}\ol{b} \varphi_{1} \ol{\varphi}_{2}). 
\]
\end{thm}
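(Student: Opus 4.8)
The plan is to diagonalize $U(A)$ by the discrete Fourier transform, since it commutes with the shift $\tau$. For $f\in\ell^{2}(\mb{Z})\otimes\mb{C}^{2}$ set $\wh{f}(k)=\sum_{x\in\mb{Z}}f(x)e^{-ikx}$ for $k\in\mb{T}=\mb{R}/2\pi\mb{Z}$; then $\tau$ becomes multiplication by $e^{-ik}$ and $U(A)$ becomes multiplication by the unitary matrix
\[
\wh{U}(k)=P_{A}e^{-ik}+Q_{A}e^{ik}.
\]
Because $\det\wh{U}(k)=\det A=1$ and $\tr\wh{U}(k)=ae^{-ik}+\ol{a}e^{ik}=2\re(ae^{-ik})$, a harmless rotation $k\mapsto k+\arg a$ of the integration variable puts the eigenvalues of $\wh{U}(k)$ into the form $e^{\pm i\theta(k)}$ with $\cos\theta(k)=|a|\cos k$. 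Writing the spectral resolution $\wh{U}(k)=\sum_{\pm}e^{\pm i\theta(k)}\Pi_{\pm}(k)$ (equivalently, by Cayley--Hamilton one expresses $\wh{U}(k)^{n}$ through Chebyshev polynomials evaluated at $\cos\theta(k)$, which is where the Chebyshev input enters), one gets $\wh{U}(k)^{n}\varphi=\sum_{\pm}e^{\pm in\theta(k)}\Pi_{\pm}(k)\varphi$.

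I would then prove convergence of the rescaled distributions through their characteristic functions and invoke L\'evy's continuity theorem. Since $\big(U(A)^{n}(\delta_{0}\otimes\varphi)\big)\!\wh{\ }(k)=\wh{U}(k)^{n}\varphi$, Plancherel's identity on $\ell^{2}(\mb{Z})\otimes\mb{C}^{2}$ together with the Fourier representation of $\delta_{x}$ gives, after summing the geometric series in $x$,
\[
\sum_{x\in\mb{Z}}e^{i\xi x/n}\,p_{n}^{A}(\varphi;x)=\frac{1}{2\pi}\int_{\mb{T}}\big\langle \wh{U}(k)^{n}\varphi,\ \wh{U}\big(k+\tf{\xi}{n}\big)^{n}\varphi\big\rangle\,dk.
\]
Inserting the spectral resolution splits the integrand into diagonal terms ($\pm=\pm'$) and cross terms ($\pm\neq\pm'$). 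The cross terms carry an oscillatory phase $e^{\pm 2in\theta(k)}$ up to $O(1/n)$ corrections and hence tend to $0$ by the Riemann--Lebesgue lemma, while in the diagonal terms $n[\theta(k)-\theta(k+\xi/n)]\to-\xi\theta'(k)$ and $\Pi_{\pm}(k+\xi/n)\to\Pi_{\pm}(k)$.

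Carrying out the limit yields the effective formula for the limiting characteristic function,
\[
\lim_{n\to\infty}\sum_{x\in\mb{Z}}e^{i\xi x/n}\,p_{n}^{A}(\varphi;x)=\frac{1}{2\pi}\int_{\mb{T}}\sum_{\pm}e^{\mp i\xi\,\theta'(k)}\,\langle\Pi_{\pm}(k)\varphi,\varphi\rangle\,dk,
\]
which exhibits the limit as the push-forward of $\frac{1}{2\pi}\sum_{\pm}\langle\Pi_{\pm}(k)\varphi,\varphi\rangle\,dk$ under the group-velocity map $y=\mp\theta'(k)$. (Evaluating at $\xi=0$ gives $\|\varphi\|^{2}=1$, and the limit is continuous in $\xi$, so L\'evy's theorem applies and the $\mu_{n}$ converge weakly.) It then remains to change variables. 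From $\cos\theta(k)=|a|\cos k$ one computes $\theta'(k)=|a|\sin k/\sqrt{1-|a|^{2}\cos^{2}k}$, whose range is exactly $(-|a|,|a|)$, and inverting produces the weight $1/[\pi(1-y^{2})\sqrt{|a|^{2}-y^{2}}]$ apart from the numerator coming from the projections.

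I expect the main obstacle to be this last step: computing the eigenprojections $\Pi_{\pm}(k)$ in closed form and tracking the change of variables carefully enough that the $\varphi$-dependent numerator collapses to exactly the stated combination. Concretely, pairing $\Pi_{\pm}(k)$ with $\varphi$, the $k$-even part of $\langle\Pi_{\pm}(k)\varphi,\varphi\rangle$ should produce the factor $|b|$ and the $k$-odd part the asymmetry $|b|\lambda_{A}(\varphi)y$, assembling into the density $|b|(1+\lambda_{A}(\varphi)y)/[\pi(1-y^{2})\sqrt{|a|^{2}-y^{2}}]$ with $\lambda_{A}(\varphi)=|\varphi_{1}|^{2}-|\varphi_{2}|^{2}-|a|^{-2}(ab\ol{\varphi}_{1}\varphi_{2}+\ol{a}\,\ol{b}\,\varphi_{1}\ol{\varphi}_{2})$. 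The hypothesis $ab\neq0$ guarantees $0<|a|<1$, so that the support is a genuine interval and $\theta'$ is a smooth diffeomorphism onto $(-|a|,|a|)$ away from its endpoints; this is precisely what makes both the Riemann--Lebesgue step and the change of variables legitimate.
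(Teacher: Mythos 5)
Your argument is sound in its overall design, but it is not the paper's argument: it is essentially the spectral/Fourier method of Grimmett--Janson--Scudo \cite{GJS}, which the paper cites and deliberately sets aside. The paper never diagonalizes the Fourier multiplier. Instead it encodes the walk in the algebraic relations (QW1)--(QW4), (QWR), writes $U=sV+tW$ as $x+(iy+w)$ where $x=sX$ commutes with $iy+w$ and $(iy+w)^{2}=x^{2}-I$, and obtains $U^{n}=T_{n}(x)+U_{n-1}(x)(iy+w)$ directly from the binomial expansion: the Chebyshev polynomials enter through operator identities, not through eigenvalues $e^{\pm in\theta(k)}$. This yields closed Laurent-polynomial formulas for all amplitudes at finite $n$ (Lemma \ref{EXP1}), after which the limit is extracted from the convolution identity \eqref{cross1}, the Taylor expansion of Lemma \ref{taylor1}, and the Riemann--Lebesgue lemma (Lemma \ref{asym1}). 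Your route buys brevity and standard tools (Plancherel, spectral resolution, L\'evy continuity), and it works directly with $p_{n}^{A}(\varphi;x)$, so it does not need the paper's reduction of Theorem \ref{KonnoWL} to Theorem \ref{WLT} via the dictionary of Example \ref{ex3}. The paper's route buys explicit finite-$n$ expressions for the distribution, avoids eigenprojections entirely (its stated objection to \cite{GJS} is precisely that the spectral formula leaves $p_{n}^{A}$ implicit and becomes awkward in higher dimensions), and sets up an algebraic framework intended to generalize. The two proofs reunite at the very end: the same Riemann--Lebesgue step and the same group-velocity substitution $y=|a|\sin k/\sqrt{1-|a|^{2}\cos^{2}k}$, which is the paper's change of variables $y=\sqrt{(s^{2}-x^{2})/(1-x^{2})}$ in disguise.

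That said, your sketch stops exactly where the theorem's specific content lies, and to be a complete proof it needs the following repairs. First, the deferred computation: with $\theta(k)$ normalized by $\cos\theta(k)=|a|\cos k$ after your rotation, one has $\Pi_{\pm}(k)=\bigl(\wh{U}(k)-e^{\mp i\theta(k)}I\bigr)/\bigl(\pm 2i\sin\theta(k)\bigr)$, hence $\ispa{\Pi_{\pm}(k)\varphi,\varphi}=\tf{1}{2}\pm\bigl[\ispa{\wh{U}(k)\varphi,\varphi}-|a|\cos k\bigr]/\bigl(2i\sin\theta(k)\bigr)$. Expanding $\ispa{\wh{U}(k)\varphi,\varphi}$ shows the correction splits into a term proportional to $\sin k$ and a term proportional to $\im(ab\ol{\varphi}_{1}\varphi_{2})\cos k$. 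Evaluated at $y=\mp\theta'(k)$, the $\sin k$ term contributes exactly $\tf{1}{2}\lambda_{A}(\varphi)\,y$ on \emph{every} branch, while the $\cos k$ term cancels between the two preimages $k$ and $\pi-k$ of the same $y$; this cancellation must be observed, since otherwise the limit would appear to depend on $\im(ab\ol{\varphi}_{1}\varphi_{2})$, which does not occur in Konno's formula. Second, a bookkeeping slip: the factor $|b|$ does not come from the even part of $\ispa{\Pi_{\pm}(k)\varphi,\varphi}$ as you predict (that part just sums to $1$); it comes from the Jacobian of the inversion, $dk/dy=|b|/\bigl[(1-y^{2})\sqrt{|a|^{2}-y^{2}}\bigr]$ per branch. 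Third, for the Riemann--Lebesgue step what you need is that $\theta$ itself (not $\theta'$) is monotone off the turning points $k=0,\pi$, with Jacobian $1/\theta'$ having only integrable inverse-square-root singularities there; this holds because $0<|a|<1$. With these points supplied, your argument is a complete and valid alternative proof.
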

There are several methods to prove Theorem $\ref{KonnoWL}$. In \cite{Ko1}, Konno computed the probability distribution $p_{n}^{A}(\varphi;x)$ directly and explicitly, 
and then used a result on asymptotic behavior of Jacobi polynomials\cite{CI}. 
In \cite{GJS}, Grimmett-Janson-Scudo employs an integral formula involving the eigenvalues 
of the matrix
\[
A(z)=
\begin{pmatrix}
az & bz^{-1} \\
-\ol{b}z & \ol{a}z^{-1}
\end{pmatrix},\quad z \in \mb{C} \setminus \{0\}. 
\]
Explicit computation of $p_{n}^{A}(\varphi;x)$ in \cite{Ko1} involves computation of words in the matrix $P_{A}$, $Q_{A}$. 
It is a very interesting computation, but it is a bit complicated. 
Furthermore, the asymptotic formula for Jacobi polynomials used in \cite{Ko1} is not elementary. 
The integral formula involving the eigenvalues of the matrix $A(z)$ is very powerful. 
Indeed, in \cite{ST}, the integral formula is used to compute the local asymptotics of $p_{n}^{A}(\varphi;x)$. 
However, the integral formula does not make explicit form of $p_{n}^{A}(\varphi;x)$ completely clear. 
Furthermore, it would not be quite easy to compute the eigenvalues in higher dimensional cases. 

In the present paper, we give a simple and systematic computation of $p_{n}^{A}(\varphi;x)$ and prove 
Theorem $\ref{KonnoWL}$ in an elementary way. 
Our strategy is to exploit an algebraic structure behind the quantum walk $U(A)$. 
See Section $\ref{ALG}$ for details on this algebraic structure. 
This structure characterizes, in some sense, the quantum walk $U(A)$. 
Furthermore, this structure naturally leads us to obtain an effective formula of $p_{n}^{A}(\varphi;x)$ in a systematic way, 
and then asymptotic behavior of the characteristic function of the distribution $p_{n}^{A}(\varphi;x)$ 
is easily deduced using simple properties of the Chebyshev polynomials. 
In the present paper, higher dimensional cases are not discussed. 
(See \cite{WKKK} for a pioneer work on the weak limit formula of certain one-parameter family of two-dimensional quantum walks.) 
However, we expect that the algebraic structure introduced here would give some insight for quantum walks in higher dimension.

\section{An algebraic structure for one-dimensional quantum walks}\label{ALG}

Suppose that we are given unitary operators, 
\[
V,\quad W,\quad \sigma, 
\]
on a Hilbert space $\hcal_{0}$, whose inner product is denoted by $\ispa{\cdot,\cdot}$, such that they satisfy the following conditions. 

\begin{itemize}
\item[(QW1)] $W^{2}=-I$. 
\item[(QW2)] $VW=WV^{-1}$. 
\item[(QW3)] $\sigma W +W \sigma =\sigma V- V \sigma =0$. 
\item[(QW4)] $\sigma^{*}=\sigma$. 
\end{itemize}

Note that (QW3) implies that $\sigma \neq I$.  
Since $W$, $\sigma$ are unitary operators, (QW1), (QW4) imply 
$W^{*}=W^{-1}=-W$, $\sigma^{*}=\sigma^{-1}=\sigma$. 
Let 
\[
\pi_{\pm}:=\frac{1}{2}(I \pm \sigma)
\]
be the projection onto the eigenspace of $\sigma$ with the eigenvalue $\pm 1$. 
Since $\sigma \neq I$, we have $\pi_{-} \neq 0$ and (QW3) implies that 
$V \pi_{\pm}=\pi_{\pm} V$, $W\pi_{\pm}=\pi_{\mp}W$. 
We set 
\begin{equation}\label{shift1}
X=\frac{1}{2}(V+V^{*}),\quad Y=\frac{1}{2i}(V-V^{*}),\quad T=X+i\sigma Y. 
\end{equation}
Then, it is obvious that we have
\begin{equation}\label{rel1}
\begin{gathered}
XY=YX,\quad XW=WX,\quad YW+WY=0,\quad VT=TV,\quad TW=WT,\\
X \sigma =\sigma X,\quad Y\sigma =\sigma Y,\quad T \sigma =\sigma T. 
\end{gathered}
\end{equation}
Since $T^{*}=X-i\sigma Y$, we see 
\[
T^{*}T=TT^{*}=X^{2}+Y^{2}=I, 
\]
which indicates that $T$ is a unitary operator on $\hcal_{0}$. 
The following lemma can easily be proved. 
\begin{lem}\label{auxL1}
\noindent$(1)$ We have $T=\pi_{+}V+\pi_{-}V^{*}$, $V=\pi_{+}T+\pi_{-}T^{*}$. 

\noindent$(2)$ Set $\ep =VW$. Then we have 
\[
\ep^{*}=\ep^{-1}=-\ep,\quad \ep \pi_{\pm}=\pi_{\mp}\ep,\quad 
\ep W=-V,\quad W\ep =-V^{*},\quad \ep V=V^{*}\ep,\quad \ep \sigma +\sigma \ep =0.
\]
\end{lem}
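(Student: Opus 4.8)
The plan is to prove both parts by direct algebraic manipulation, so the real work is to isolate, once and for all, the handful of rewriting rules that the relations (QW1)--(QW4) provide, and then apply them mechanically. For part $(1)$ I would substitute the definitions \eqref{shift1} of $X$ and $Y$ into $T=X+i\sigma Y$. The factor of $i$ in $Y=\frac{1}{2i}(V-V^{*})$ cancels against the $i$ in front of $\sigma Y$, giving $T=\frac12(V+V^{*})+\frac12\sigma(V-V^{*})=\frac12(I+\sigma)V+\frac12(I-\sigma)V^{*}$, which is exactly $\pi_{+}V+\pi_{-}V^{*}$. For the second identity I would first record $T^{*}=X-i\sigma Y=\pi_{-}V+\pi_{+}V^{*}$, using that $X,Y,\sigma$ are self-adjoint and that $Y\sigma=\sigma Y$ from \eqref{rel1}, and then expand $\pi_{+}T+\pi_{-}T^{*}$ and invoke $\pi_{\pm}^{2}=\pi_{\pm}$ together with $\pi_{+}\pi_{-}=\pi_{-}\pi_{+}=0$. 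The cross terms vanish, leaving $\pi_{+}V+\pi_{-}V=V$.

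For part $(2)$ the key preliminary step is to convert (QW2) into two rewriting rules. Since $V$ is unitary, $V^{-1}=V^{*}$, so (QW2) reads $VW=WV^{*}$; multiplying $VW=WV^{-1}$ on the right by $V$ gives $VWV=W$, hence $WV=V^{*}W$. With these in hand, each assertion becomes a single substitution. I get $\ep^{*}=(VW)^{*}=-WV^{*}=-VW=-\ep$ and $\ep^{-1}=W^{-1}V^{-1}=-WV^{*}=-\ep$; then $\ep W=VW^{2}=-V$ by (QW1) and $W\ep=W(WV^{*})=W^{2}V^{*}=-V^{*}$. The intertwining relations $W\pi_{\pm}=\pi_{\mp}W$ and $V\pi_{\pm}=\pi_{\pm}V$ give $\ep\pi_{\pm}=VW\pi_{\pm}=V\pi_{\mp}W=\pi_{\mp}VW=\pi_{\mp}\ep$. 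For the remaining two I compute $\ep V=V(WV)=V(V^{*}W)=W$ and $V^{*}\ep=(V^{*}V)W=W$, so $\ep V=V^{*}\ep$; and finally $\ep\sigma=VW\sigma=-V\sigma W=-\sigma VW=-\sigma\ep$, where I use $W\sigma=-\sigma W$ and $\sigma V=V\sigma$ from (QW3).

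The computation carries no genuine obstacle; the only thing to watch is the sign bookkeeping introduced by $W^{2}=-I$ and by the anticommutations $\sigma W=-W\sigma$ and $\sigma\ep=-\ep\sigma$, together with the correct use of $V^{-1}=V^{*}$ when rewriting (QW2). Establishing the two rules $VW=WV^{*}$ and $WV=V^{*}W$ at the outset is precisely what keeps each verification to a single line.
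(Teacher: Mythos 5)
Your proof is correct and is exactly the direct algebraic verification the paper intends: the paper states this lemma with the remark that it ``can easily be proved'' and omits the details, and your computation (expanding $T=X+i\sigma Y$ via \eqref{shift1} for part (1), and reducing everything in part (2) to the rewriting rules $VW=WV^{*}$, $WV=V^{*}W$, $W^{2}=-I$, $\sigma W=-W\sigma$, $\sigma V=V\sigma$) supplies precisely those details with correct sign bookkeeping throughout.
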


\begin{example}\label{ex0}
We take $\alpha,\beta \in \mb{C}$ with $|\alpha|=|\beta|=1$ and define the matrices, $V_{0}$, $W_{0}$, $\sigma$, by 
\[
V_{0}=
\begin{pmatrix}
\alpha & 0 \\
0 & \ol{\alpha}
\end{pmatrix},
\quad 
W_{0}=
\begin{pmatrix}
0 & \beta \\
-\ol{\beta} & 0 
\end{pmatrix},\quad 
\sigma =
\begin{pmatrix}
1 & 0 \\
0 & -1
\end{pmatrix}. 
\]
The matrices, $V_{0}$, $W_{0}$, $\sigma$, are unitary matrices and satisfy the conditions (QW1)-(QW4) with 
$T=\alpha I$, where $T$ is defined in $\eqref{shift1}$.
\end{example}

\begin{example}\label{ex1}
{\rm 
Let us consider the quantum walks $V=U(V_{0})$, $W=U(W_{0})$ defined by the formula $\eqref{qwalk}$ with the unitary matrix $A$ replaced by $V_{0}$, $W_{0}$ 
given in Example $\ref{ex0}$, respectively. 
Then, the unitary operators, $V$, $W$, $\sigma$, on $\ell^{2}(\mb{Z}) \otimes \mb{C}^{2}$ also satisfy the conditions (QW1)-(QW4) with $T=\alpha \tau$, 
where $\tau$ is, as in Section \ref{INTRO}, the shift operator. } 
\end{example}
Next, we impose the following condition. 


\begin{itemize}
\item[(QWR)] There exists a unit vector $e \in \pi_{+}\hcal_{0}$ such that $\ispa{V^{x}e,e}=0$ for any positive integer $x$. 
\end{itemize}


Note that the conditions (QW1)-(QW4) have the meaning for unitary elements, $V$, $W$, $\sigma$, 
in a $C^{*}$-algebra, say $\acal_{0}$. In this context, the condition (QWR) is a condition on a representation on $\hcal_{0}$ 
of the $C^{*}$-subalgebra in $\acal_{0}$ generated by $V$, $W$, $\sigma$. 

\begin{example}\label{ex2}
{\rm The unitary operator $V$ on $\ell^{2}(\mb{Z}) \otimes \mb{C}^{2}$ given in Example $\ref{ex1}$ satisfies the condition (QWR) with the unit vector $e=\delta_{0} \otimes {\bf e}_{1}$. 
However, the unitary matrix $V_{0}$ on $\mb{C}^{2}$ given in Example $\ref{ex0}$ does not satisfy the condition (QWR).}
\end{example}

In what follows, we suppose that the operator $V$ satisfies the condition (QWR) with a unit vector $e \in \pi_{+}\hcal_{0}$. 

\begin{lem}\label{ons1}
The vectors $T^{x}e$ $(x \in \mb{Z})$ form an orthonormal system in $\hcal_{0}$. 
\end{lem}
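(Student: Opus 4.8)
The plan is to reduce the orthonormality assertion to the single condition (QWR) by showing that, on the relevant eigenspace, the unitary $T$ acts exactly as $V$. Since $T$ is unitary we have $\ispa{T^{x}e,T^{y}e}=\ispa{T^{x-y}e,e}$, so it suffices to prove $\ispa{T^{n}e,e}=\delta_{n,0}$ for every $n \in \mb{Z}$.

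First I would use that $e \in \pi_{+}\hcal_{0}$ means $\pi_{+}e=e$ and $\pi_{-}e=0$. By Lemma~\ref{auxL1}(1) we have $T=\pi_{+}V+\pi_{-}V^{*}$, while $V\pi_{\pm}=\pi_{\pm}V$ and (from \eqref{rel1}) $T\sigma=\sigma T$; hence both $V$ and $T$ commute with $\pi_{\pm}$ and so preserve the eigenspace $\pi_{+}\hcal_{0}$. For any $v \in \pi_{+}\hcal_{0}$ this gives $Vv \in \pi_{+}\hcal_{0}$ and $V^{*}v \in \pi_{+}\hcal_{0}$, so that $\pi_{+}Vv=Vv$ and $\pi_{-}V^{*}v=0$, whence $Tv=Vv$. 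Thus $T$ and $V$ coincide on $\pi_{+}\hcal_{0}$.

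Next, since $\pi_{+}\hcal_{0}$ is invariant under the unitary $T$ (and under $V$), the restrictions $T|_{\pi_{+}\hcal_{0}}$ and $V|_{\pi_{+}\hcal_{0}}$ are equal unitaries of that subspace, so their inverses agree there as well. Starting from $e \in \pi_{+}\hcal_{0}$ and iterating, I would conclude by induction that $T^{n}e=V^{n}e$ for every $n \in \mb{Z}$, the negative powers being controlled by the equality of the restricted inverses. Consequently $\ispa{T^{n}e,e}=\ispa{V^{n}e,e}$ for all $n$.

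Finally, (QWR) gives $\ispa{V^{n}e,e}=0$ for each positive integer $n$; for negative $n$ the unitarity of $V$ yields $\ispa{V^{n}e,e}=\ol{\ispa{V^{-n}e,e}}=0$, and for $n=0$ the value is $\|e\|^{2}=1$. Hence $\ispa{T^{n}e,e}=\delta_{n,0}$, which is exactly the orthonormality of $\{T^{x}e\}_{x\in\mb{Z}}$. The one point requiring a little care is the extension $T^{n}e=V^{n}e$ to negative $n$, which is why I isolate the invariance of $\pi_{+}\hcal_{0}$ and the resulting agreement of the restricted inverses rather than relying only on the pointwise identity $Tv=Vv$; the rest is a direct unwinding of the definitions.
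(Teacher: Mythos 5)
Your proof is correct and takes essentially the same route as the paper: both arguments reduce the claim to the identity $T^{x}e=V^{x}e$ for all $x\in\mb{Z}$ and then invoke (QWR), handling negative $x$ by conjugation. The only cosmetic difference is in how the identity is propagated to all powers --- the paper starts from $Te=Ve$ and $T^{-1}e=V^{*}e$ and uses the commutativity of $T$ and $V$, while you use the invariance of $\pi_{+}\hcal_{0}$ and the agreement of the restricted unitaries; both mechanisms are equally valid.
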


\begin{proof}
It is enough to show that $\ispa{T^{x}e,e}=0$ for any non-zero integer $x$. 
Since $V$ and $\sigma$ are commutative, we have 
\begin{equation}\label{TandV}
Te=Ve \in \pi_{+}\hcal_{0},\quad T^{-1}e=T^{*}e=V^{*}e \in \pi_{+}\hcal_{0}. 
\end{equation}
Since $T$ and $V$ are commutative, we see $T^{x}e=V^{x}e$ for any integer $x$, which combined with the assumption (QWR) shows the assertion. 
\end{proof}
We set 
\[
e_{1}^{x}=T^{x}e,\quad e_{2}^{x}=T^{x}\ep e \quad (x \in \mb{Z}),\quad 
e_{1}=e_{1}^{0},\quad e_{2}=e_{2}^{0}.  
\]
Then, by Lemma $\ref{auxL1}$ and the fact that $T$ preserves the eigenspaces of $\sigma$, we have $e_{1}^{x} \in \pi_{+}\hcal_{0}$, $e_{2}^{x} \in \pi_{-}\hcal_{0}$, 
and the vectors $e_{i}^{x}$ $(i=1,2,\ \,x \in \mb{Z})$ form an orthonormal system in $\hcal_{0}$. 
Now, let $\ell$ be the closure of the subspace spanned by $\{e_{1}^{x}\,;\,x \in \mb{Z}\}$. 
Then, the closure of the subspace spanned by $\{e_{2}^{x}\,;\,x \in \mb{Z}\}$ is $\ep \ell$. 
Since $\ell \subset \pi_{+}\hcal_{0}$, $\ep \ell \subset \pi_{-}\hcal_{0}$, these are orthogonal to each other. We define the subspace 
\[
\hcal:=\ell \oplus \ep \ell
\]
in $\hcal_{0}$, in which the set $\{e_{i}^{x}\,;\,i=1,2,\ x \in \mb{Z}\}$ becomes a complete orthonormal system. 
\begin{lem}\label{ons2}
The operators, $V$, $W$, $\sigma$, and their inverses preserve the subspace $\hcal$ so that these define unitary operators on $\hcal$. 
\end{lem}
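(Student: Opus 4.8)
The plan is to verify the statement on the complete orthonormal system $\{e_i^x : i=1,2,\ x\in\mb{Z}\}$ and then extend by continuity. Since $V$, $W$, $\sigma$ are bounded and the $e_i^x$ span a dense subspace of $\hcal$, it suffices to show that each of $V^{\pm1}$, $W^{\pm1}$, $\sigma^{\pm1}$ sends every $e_i^x$ into $\hcal$; in fact I expect each generator to carry a basis vector to another basis vector up to sign, so that $\hcal$ is an invariant (indeed reducing) subspace on which the operators restrict to unitaries.

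First I would dispose of the easy generators. Because $e_1^x \in \pi_+\hcal_0$ and $e_2^x \in \pi_-\hcal_0$, the self-adjoint unitary $\sigma$ acts diagonally, $\sigma e_1^x = e_1^x$ and $\sigma e_2^x = -e_2^x$, with $\sigma^{-1}=\sigma$, so $\sigma$ visibly preserves $\hcal$. For $V$ and $V^{-1}=V^*$ I would use the decomposition $V=\pi_+ T+\pi_- T^*$ from Lemma $\ref{auxL1}(1)$, together with the facts from $\eqref{rel1}$ that $T$ commutes with $\sigma$ (hence preserves each eigenspace $\pi_\pm\hcal_0$) and that $V$ commutes with $\pi_\pm$. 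Applying $V$ to $e_1^x=T^x e\in\pi_+\hcal_0$ leaves only the $\pi_+ T$ term and yields $e_1^{x+1}$, while applying it to $e_2^x=T^x\ep e\in\pi_-\hcal_0$ leaves only the $\pi_- T^*$ term and yields $e_2^{x-1}$; taking adjoints gives the companion formulas for $V^*$. Thus $V$ and $V^*$ act as shifts within $\ell$ and within $\ep\ell$ separately.

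The main work, and the only delicate step, is $W$, because $W$ intertwines the two eigenspaces ($W\pi_\pm=\pi_\mp W$) and so genuinely mixes $\ell$ and $\ep\ell$. The key observation is that $W$ commutes with $T$ (directly from $TW=WT$ in $\eqref{rel1}$), so that $W e_1^x=T^x(We)$ and $W e_2^x=T^x(W\ep e)$, and everything reduces to identifying $We$ and $W\ep e$. Using $\ep=VW$ I rewrite $We=V^*\ep e=V^* e_2^0$, which by the $V^*$-formula above equals $e_2^1=T\ep e$; hence $W e_1^x=e_2^{x+1}\in\ep\ell$. For the other family I use the relation $W\ep=-V^*$ from Lemma $\ref{auxL1}(2)$ to get $W\ep e=-V^* e=-e_1^{-1}$, whence $W e_2^x=-e_1^{x-1}\in\ell$. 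So $W$ maps the basis into $\hcal$, with the signs consistent with $W^2=-I$, and since $W^{-1}=-W$ by (QW1) the inverse does as well.

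Having shown that all six operators carry the spanning set $\{e_i^x\}$ into $\hcal$, boundedness and the density of the span give invariance of $\hcal$ under $V^{\pm1}$, $W^{\pm1}$, $\sigma^{\pm1}$; restricting the original unitaries to this reducing subspace then produces unitary operators on $\hcal$, as claimed. The one place to be careful is the bookkeeping for $W$: one must track which eigenspace of $\sigma$ each vector lies in and invoke $\ep=VW$, $W\ep=-V^*$, and the commutation $WT=TW$ in the right order, rather than attempting to compute $W$ directly from its definition.
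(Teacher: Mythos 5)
Your proposal is correct and takes essentially the same route as the paper: the paper's proof consists precisely of recording the action formulas $Ve_{1}^{x}=e_{1}^{x+1}$, $Ve_{2}^{x}=e_{2}^{x-1}$, $We_{1}^{x}=e_{2}^{x+1}$, $We_{2}^{x}=-e_{1}^{x-1}$ (its equation \eqref{act1}), deduced from \eqref{TandV} and Lemma \ref{auxL1}, which is exactly what you derive. Your write-up merely makes explicit the bookkeeping (the $\sigma$-eigenspace tracking, the use of $\ep=VW$ and $W\ep=-V^{*}$, and the density argument) that the paper leaves to the reader.
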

Lemma $\ref{ons2}$ follows from $\eqref{TandV}$ and Lemma $\ref{auxL1}$. Indeed, we have the following. 
\begin{equation}\label{act1}
Ve_{1}^{x}=e_{1}^{x+1},\quad Ve_{2}^{x}=e_{2}^{x-1},\quad We_{1}^{x}=e_{2}^{x+1},\quad We_{2}^{x}=-e_{1}^{x-1}. 
\end{equation}
Our main interest is in the operator, $U$, defined by 
\begin{equation}\label{opU1}
U=sV+tW,\quad s,t \in \mb{R},\ \ s^{2}+t^{2}=1. 
\end{equation}
It is obvious that $U$ is a unitary operator on $\hcal_{0}$ and on $\hcal$. 
Since the operators of the form $\pm V$, $\pm W$, $\sigma$ also satisfy the conditions (QW1)-(QW4), (QWR), 
we may assume that $0 \leq s,t \leq 1$. 
Let $\psi =\,^{t}(\psi_{1},\psi_{2})$ be a unit vector in $\mb{C}^{2}$, 
and define $\Psi=\psi_{1}e_{1}+\psi_{2}e_{2}$, which is a unit vector in $\hcal$. 
For any integer $n$, we define a function $q_{n}(\psi;x)$ in $x \in \mb{Z}$ by 
\begin{equation}\label{df1}
q_{n}(\psi;x):=|\ispa{U^{n}\Psi,e_{1}^{x}}|^{2}+|\ispa{U^{n}\Psi,e_{2}^{x}}|^{2}. 
\end{equation}
Since $U$ is a unitary operator on $\hcal$, we have $\sum_{x \in \mb{Z}}q_{n}(\psi;x)=1$ and hence it defines a probability distribution on $\mb{Z}$. 

\begin{example}\label{ex3}
{\rm 
Let $V$, $W$, $\sigma$ be the unitary operators on $\ell^{2}(\mb{Z}) \otimes \mb{C}$ defined in Example $\ref{ex1}$. 
Let $e=\delta_{0} \otimes {\bf e}_{1}$, which satisfies the condition (QWR) as in Example $\ref{ex2}$. 
In this example, the unitary operator $U$ defined in $\eqref{opU1}$ has the form
\[
U=
\begin{pmatrix}
a & 0 \\
-\ol{b} & 0 
\end{pmatrix}\tau+
\begin{pmatrix}
0 & b \\
0 & \ol{a}
\end{pmatrix}\tau^{-1},\quad 
a=s\alpha,\ b=t\beta, 
\]
which coincides with the quantum walk $U(A)$ defined in $\eqref{qwalk}$ with the 
unitary matrix $A$ in $\eqref{Amat}$. 
Since $VW=
\begin{pmatrix}
0 & \alpha\beta \\
-\ol{\alpha}\ol{\beta} & 0
\end{pmatrix}$, we see $e_{2}=VW(\delta_{0} \otimes {\bf e}_{1})=-\ol{\alpha}\ol{\beta}(\delta_{0} \otimes {\bf e}_{2})$. 
Since $T=\alpha \tau$ as in Example $\ref{ex2}$, the vectors, $e_{i}^{x}$, are given by 
\[
e_{1}^{x}=\alpha^{x}(\delta_{x} \otimes {\bf e}_{1}),\quad 
e_{2}^{x}=-\alpha^{x-1}\ol{\beta}(\delta_{x} \otimes {\bf e}_{2}) \quad (x \in \mb{Z}). 
\]
Thus, we have $\hcal_{0}=\hcal=\ell^{2}(\mb{Z}) \otimes \mb{C}^{2}$. 
As above, let $\psi=\,^{t}(\psi_{1},\psi_{2})$ be a unit vector in $\mb{C}^{2}$. Then the corresponding vector $\Psi$ in $\ell^{2}(\mb{Z}) \otimes \mb{C}^{2}$ is 
\[
\Psi=\psi_{1}e_{1}+\psi_{2}e_{2}=\delta_{0} \otimes \varphi,\quad 
\varphi=\,^{t}(\psi_{1},-\ol{\alpha}\ol{\beta}\psi_{2}).  
\]
From this we have 
\[
\begin{gathered}
\ispa{U^{n}\Psi,e_{1}^{x}}=\alpha^{x}\ispa{U^{n}(\delta_{0} \otimes \varphi),\delta_{x} \otimes {\bf e}_{1}},\\
\ispa{U^{n}\Psi,e_{2}^{x}}=-\alpha^{-x+1}\beta\ispa{U^{n}(\delta_{0} \otimes \varphi), \delta_{x} \otimes {\bf e}_{2}},  
\end{gathered}
\]
which implies $q_{n}(\psi;x)=p_{n}^{A}(\varphi;x)$ with $(\psi_{1},\psi_{2})=(\varphi_{1},-\alpha \beta \varphi_{2})$, where $p_{n}^{A}(\varphi;x)$ is defined in $\eqref{df0}$. 
}
\end{example}

As in Example $\ref{ex3}$, it is enough to prove the following theorem to show Theorem $\ref{KonnoWL}$.

\begin{thm}\label{WLT}
Let $\psi=\,^{t}(\psi_{1},\psi_{2})$ be a unit vector in $\mb{C}^{2}$. Let $s,t$ be real numbers satisfying $0 < s,t < 1$, $s^{2}+t^{2}=1$.  Then we have 
\[
\wlim_{n \to \infty}\sum_{x \in \mb{Z}}q_{n}(\psi;x)\delta_{x/n}=\chi_{(-s,s)}(y)
\frac{t(1+\lambda(\psi;s,t)y)}{\pi (1-y^{2})\sqrt{s^{2}-y^{2}}}\,dy, 
\]
where the function $\lambda(\psi;s,t)$ is given by 
\[
\lambda(\psi;s,t)=|\psi_{1}|^{2}-|\psi_{2}|^{2}+2
\re(\psi_{1}\ol{\psi}_{2})\frac{t}{s}.
\]
\end{thm}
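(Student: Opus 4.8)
The plan is to pass to the Fourier picture, derive an exact integral formula for the characteristic function of $q_n(\psi;\cdot)$, and then read off the weak limit from the spectral decomposition of the Fourier symbol; throughout, the hypotheses $0<s,t<1$ guarantee that no band degeneracy occurs. First I would diagonalize the two commuting shifts. By \eqref{act1} the operators $V,W$ act on the orthonormal families $\{e_1^x\}$, $\{e_2^x\}$ as shifts, so the Fourier transform $\hcal\to L^2(\mb{T})\otimes\mb{C}^2$, $\Phi\mapsto\bigl(\sum_x\ispa{\Phi,e_1^x}e^{ikx},\ \sum_x\ispa{\Phi,e_2^x}e^{ikx}\bigr)$, conjugates $V$ to $\diag(e^{ik},e^{-ik})$ and $W$ to $\left(\begin{smallmatrix}0 & -e^{-ik}\\ e^{ik} & 0\end{smallmatrix}\right)$. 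Hence $U=sV+tW$ becomes multiplication by
\[
\wh U(k)=\begin{pmatrix} se^{ik} & -te^{-ik}\\ te^{ik} & se^{-ik}\end{pmatrix},\qquad \det\wh U(k)=1,\quad \tr\wh U(k)=2s\cos k,
\]
while the vector $\Psi$ becomes the constant function $\psi$.

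Second, I would record the effective formula. Writing each coefficient $\ispa{U^n\Psi,e_i^x}$ in \eqref{df1} as the $x$-th Fourier coefficient of $\wh U(k)^n\psi$ and summing the geometric series in $x$ (Parseval) gives, for $\eta\in\mb{R}$,
\[
\sum_{x\in\mb{Z}}q_n(\psi;x)\,e^{i\eta x}=\frac{1}{2\pi}\int_0^{2\pi}\ispa{\wh U(k)^n\psi,\ \wh U(k-\eta)^n\psi}\,dk.
\]
Setting $\eta=\xi/n$ turns the left side into the characteristic function of $\mu_n:=\sum_x q_n(\psi;x)\delta_{x/n}$, so by L\'evy's continuity theorem the theorem reduces to computing $\lim_{n\to\infty}$ of the right side for each fixed $\xi$.

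Third, I would compute the powers spectrally. Since $\det\wh U(k)=1$ and $\tr\wh U(k)=2\cos\theta(k)$ with $\cos\theta(k)=s\cos k$, and since $s<1$ forces $\theta(k)\in(0,\pi)$ (so $\sin\theta(k)>0$ and everything is smooth in $k$), one has $\wh U(k)^n=e^{in\theta(k)}P_+(k)+e^{-in\theta(k)}P_-(k)$, equivalently the Chebyshev identity $\wh U(k)^n=\frac{\sin n\theta}{\sin\theta}\wh U(k)-\frac{\sin(n-1)\theta}{\sin\theta}I$, where $P_\pm(k)=\pm\bigl(\wh U(k)-e^{\mp i\theta(k)}I\bigr)/(2i\sin\theta(k))$. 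Inserting this into the integral with $\eta=\xi/n$: the two diagonal terms carry the slowly varying phase $e^{\pm i n(\theta(k)-\theta(k-\xi/n))}\to e^{\pm i\theta'(k)\xi}$ and converge to $\ispa{P_\pm(k)\psi,\psi}\,e^{\pm i\theta'(k)\xi}$, while the cross terms have coefficient $\ispa{P_+(k)\psi,P_-(k-\xi/n)\psi}\to\ispa{P_+(k)\psi,P_-(k)\psi}=0$ (equivalently they oscillate like $e^{\pm 2in\theta(k)}$ and vanish by Riemann--Lebesgue). A short computation gives $\ispa{P_\pm(k)\psi,\psi}=\frac12\bigl(1\pm\im\ispa{\wh U(k)\psi,\psi}/\sin\theta(k)\bigr)$, so by dominated convergence
\[
\lim_{n\to\infty}\int_{\mb{R}}e^{i\xi y}\,d\mu_n(y)=\frac{1}{2\pi}\int_0^{2\pi}\Bigl[\tfrac12\bigl(1+G(k)\bigr)e^{i\theta'(k)\xi}+\tfrac12\bigl(1-G(k)\bigr)e^{-i\theta'(k)\xi}\Bigr]\,dk,
\]
with $G(k)=\im\ispa{\wh U(k)\psi,\psi}/\sin\theta(k)$.

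Finally, I would identify this limit as the asserted density by inverting the group velocity $y=\theta'(k)=s\sin k/\sqrt{1-s^2\cos^2 k}$. Direct computation yields $\cos^2 k=\frac{s^2-y^2}{s^2(1-y^2)}$, $1-s^2\cos^2 k=\frac{t^2}{1-y^2}$, image exactly $(-s,s)$, and Jacobian $|dk/dy|=\frac{t}{(1-y^2)\sqrt{s^2-y^2}}$. Pushing the two measures forward under $k\mapsto\pm\theta'(k)$ and collecting the preimages of a given $y$, the part of $G$ odd in $\cos k$ cancels while the part proportional to $\sin k$ survives; using $\sin k/\sin\theta(k)=y/s$ on the preimages, the weights collapse into the single factor $1+\lambda(\psi;s,t)\,y$ with $\lambda=|\psi_1|^2-|\psi_2|^2+2\re(\psi_1\ol\psi_2)t/s$, giving
\[
\chi_{(-s,s)}(y)\,\frac{t\,(1+\lambda(\psi;s,t)\,y)}{\pi(1-y^2)\sqrt{s^2-y^2}}\,dy.
\]
The main obstacle is precisely this last bookkeeping: verifying that the eigenvector overlaps $\ispa{P_\pm(k)\psi,\psi}$, summed over the several preimages of each $y$ and weighted by the Jacobian, combine exactly into the affine factor $1+\lambda y$ with the stated constant while the odd-in-$\cos k$ contributions cancel; keeping the signs of $\cos k$ and $\sin k$ straight across the four preimages is the delicate point. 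Controlling the $n\to\infty$ limit (uniformity of $n(\theta(k)-\theta(k-\xi/n))\to\theta'(k)\xi$ and vanishing of the cross terms) is routine given the smoothness secured by $0<s<1$.
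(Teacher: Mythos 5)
Your proposal is correct, but it takes a genuinely different route from the paper's: it is essentially the Fourier/spectral (``group velocity'') method of Grimmett--Janson--Scudo \cite{GJS}, which the introduction explicitly cites and deliberately avoids. You diagonalize the symbol $\wh U(k)$ --- eigenvalues $e^{\pm i\theta(k)}$ with $\cos\theta(k)=s\cos k$, spectral projections $P_{\pm}(k)$ --- and realize the limit law as the push-forward of $\ispa{P_{\pm}(k)\psi,\psi}\,dk/2\pi$ under $k\mapsto\pm\theta'(k)$. The paper never diagonalizes anything: it uses the relations (QW1)--(QW4) to expand $U^{n}=T_{n}(x)+U_{n-1}(x)(iy+w)$ as operator Chebyshev polynomials (formula \eqref{UN}), reads off Laurent-polynomial coefficients of $U^{n}e_{i}$ in the operator $T$, and computes the characteristic function via the convolution identity \eqref{cross1}, a Taylor expansion (Lemma \ref{taylor1}) and the Riemann--Lebesgue lemma (Lemma \ref{asym1}). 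Your approach buys conceptual transparency and brevity, at the price of eigenvector computations that the paper argues become unwieldy in higher dimensions; the paper's approach buys an exact closed formula for $q_{n}(\psi;x)$ itself (Lemma \ref{EXP1}) as a by-product, with no spectral computation at any stage. The two are close cousins: your $\sin n\theta/\sin\theta$ is $U_{n-1}(s\cos k)$, and your change of variables $y=\theta'(k)=s\sin k/\sqrt{1-s^{2}\cos^{2}k}$ coincides with the paper's $y=\sqrt{(s^{2}-x^{2})/(1-x^{2})}$ under $x=s\cos k$. I verified the bookkeeping you flagged as delicate: in your Fourier picture $W$ does become $\left(\begin{smallmatrix}0&-e^{-ik}\\ e^{ik}&0\end{smallmatrix}\right)$ (using $W^{*}=-W$); the overlaps are $\ispa{P_{\pm}(k)\psi,\psi}=\tfrac12(1\pm G(k))$ with $G=\im\ispa{\wh U(k)\psi,\psi}/\sin\theta$, where $\im\ispa{\wh U(k)\psi,\psi}=s\sin k\,(|\psi_{1}|^{2}-|\psi_{2}|^{2})+2t\sin k\,\re(\psi_{1}\ol{\psi}_{2})+2t\cos k\,\im(\psi_{1}\ol{\psi}_{2})$; and summing over the four preimages $\pm k_{1}$, $\pi\mp k_{1}$ of a given $y$ kills the $\cos k$ term while the $\sin k$ terms combine, via $\sin k/\sin\theta(k)=y/s$, into exactly $1+\lambda(\psi;s,t)y$. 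Two small cautions: dispatch the cross terms by the uniform convergence $\ispa{P_{+}(k)\psi,P_{-}(k-\xi/n)\psi}\to 0$ (your first argument) rather than by Riemann--Lebesgue, since the phase $e^{in(\theta(k)+\theta(k-\xi/n))}$ is not a pure frequency; and when invoking L\'evy continuity, note that your limit is continuous in $\xi$ and equals $1$ at $\xi=0$, so it is indeed the characteristic function of the stated probability density.
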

We give a simple proof of Theorem $\ref{WLT}$ in the following two sections.

\section{Computation of the distribution $q_{n}(\psi;x)$}\label{COMP}

The unitary operator, $U$, defined in $\eqref{opU1}$ is written as $U=x+iy+w$ with $x=sX$, $y=sY$, $w=tW$, where $X$, $Y$ are 
self-adjoint operators defined in $\eqref{shift1}$. By  $\eqref{rel1}$, we see that the operators $x$ and $iy+w$ are commutative and  
\begin{equation}\label{rel2}
(iy+w)^{2}=-(y^{2}+t^{2}),\quad x^{2}+y^{2}+t^{2}=I. 
\end{equation}
Therefore, we get the following. 
\[
\begin{split}
U^{n} & = (x+(iy+w))^{n}=\sum_{k=0}^{n}\binom{n}{k}x^{n-k}(iy+w)^{k} \\
& = \sum_{l=0}^{[n/2]}\binom{n}{2l}x^{n-2l}(x^{2}-I)^{l}+\sum_{l=0}^{[(n-1)/2]}\binom{n}{2l+1}x^{n-2l-1}(x^{2}-I)^{l}(iy+w). 
\end{split}
\]
Next, we recall a definition of the Chebyshev polynomials (see \cite{R} for details). 
The Chebyshev polynomials of the first kind, $T_{n}(x)$ (of degree $n$), and the second kind, $U_{n-1}(x)$ (of degree $n-1$), are defined as 
\[
T_{n}(\cos \theta)=\cos n\theta,\quad U_{n-1}(\cos \theta)=\frac{\sin n \theta}{\sin \theta}. 
\]
These polynomials can be written in the following form. 
\[
T_{n}(x)=\sum_{l=0}^{[n/2]}\binom{n}{2l}x^{n-2l}(x^{2}-1)^{l},\quad 
U_{n-1}(x)=\sum_{l=0}^{[(n-1)/2]} \binom{n}{2l+1}x^{n-2l-1}(x^{2}-1)^{l}. 
\]
From this we can write $U^{n}$ as 
\begin{equation}\label{UN}
U^{n}=T_{n}(x)+U_{n-1}(x)(iy+w)=T_{n}(x)+(iy+w)U_{n-1}(x),  
\end{equation}
where $x$, $y$ and $w$ are operators defined at the beginning of this section. 
A direct computation shows that 
\[
\begin{gathered}
xe_{1}=\frac{s}{2}(T+T^{-1})e_{1},\quad xe_{2}=\frac{s}{2}(T+T^{-1})e_{2},\\
iye_{1}=\frac{s}{2}(T-T^{-1})e_{1},\quad iye_{2}=-\frac{s}{2}(T-T^{-1})e_{2}, \quad 
we_{1}=tTe_{2},\quad we_{2}=-tT^{-1}e_{1}. 
\end{gathered}
\]
Let $p_{n}^{i}(z)$, $q_{n}^{i}(z)$ $(i=1,2)$ be Laurent polynomials (with real coefficients) defined by 
\begin{equation}\label{cefP}
\begin{split}
p_{n}^{1}(z) & = T_{n}(s(z+z^{-1})/2)+\frac{s}{2}(z-z^{-1})U_{n-1}(s(z+z^{-1})/2),\\
p_{n}^{2}(z) & = tzU_{n-1}(s(z+z^{-1})/2),\\
q_{n}^{1}(z) & = -tz^{-1}U_{n-1}(s(z+z^{-1})/2),\\
q_{n}^{2}(z) & = T_{n}(s(z+z^{-1})/2)-\frac{s}{2}(z-z^{-1})U_{n-1}(s(z+z^{-1})/2). 
\end{split}
\end{equation}
Since $T$ commutes with all the operators, we have 
\begin{equation}\label{powerN}
\begin{split}
U^{n}e_{1} & = p_{n}^{1}(T)e_{1} +p_{n}^{2}(T)e_{2}, \\
U^{n}e_{2} & = q_{n}^{1}(T)e_{1} +q_{n}^{2}(T)e_{2}.
\end{split}
\end{equation}
For any Laurent polynomial $p(z)$ in $z \in \mb{C}$, let us denote the coefficient of $z^{x}$ ($x \in \mb{Z}$) in $p(z)$ by $\ct_{x}(p)$. Then, 
for any $x \in \mb{Z}$, $\eqref{powerN}$ shows the following. 
\begin{equation}\label{powerC}
\begin{split}
\ispa{U^{n}\Psi,e_{1}^{x}} & = \psi_{1} \ct_{x}(p_{n}^{1})+\psi_{2}\ct_{x}(q_{n}^{1}),\\
\ispa{U^{n}\Psi,e_{2}^{x}} & = \psi_{1} \ct_{x}(p_{n}^{2})+\psi_{2}\ct_{x}(q_{n}^{2}). 
\end{split}
\end{equation}
Since $p_{n}^{i}(z)$, $q_{n}^{i}(z)$ have real coefficients, we have obtained the following lemma. 
\begin{lem}\label{EXP1}
The probability distribution $q_{n}(\psi;x)$ on $\mb{Z}$ defined in $\eqref{df1}$ is written as 
\begin{equation}\label{df2}
\begin{split}
q_{n}(\psi;x) & = |\psi_{1}|^{2}\left[
\ct_{x}(p_{n}^{1})^{2}+\ct_{x}(p_{n}^{2})^{2}
\right] \\
& \hspace{40pt} +|\psi_{2}|^{2}\left[
\ct_{x}(q_{n}^{1})^{2}+\ct_{x}(q_{n}^{2})^{2}
\right] \\
& \hspace{20pt} +2\re(\psi_{1}\ol{\psi}_{2})\left[
\ct_{x}(p_{n}^{1})\ct_{x}(q_{n}^{1})+\ct_{x}(p_{n}^{2})\ct_{x}(q_{n}^{2})
\right]. 
\end{split}
\end{equation}
In particular, the characteristic function
\[
E_{n}(\xi)=\sum_{x \in \mb{Z}}q_{n}(\psi;x)e^{i\xi x},\quad \xi \in \mb{R}, 
\]
is given by 
\begin{equation}\label{cha1}
\begin{split}
E_{n}(\xi)  = |\psi_{1}|^{2} & \left[
P_{n}^{1}(\xi)+P_{n}^{2}(\xi)\right] +|\psi_{2}|^{2}
\left[
Q_{n}^{1}(\xi)+Q_{n}^{2}(\xi)\right] \\
& +2\re(\psi_{1}\ol{\psi}_{2})\left[
R_{n}^{1}(\xi)+R_{n}^{2}(\xi)
\right], 
\end{split}
\end{equation}
where the functions $P_{n}^{i}(\xi)$, $Q_{n}^{i}(\xi)$ and $R_{n}^{i}(\xi)$ $(i=1,2)$ are given by 
\begin{equation}
\begin{gathered}
P_{n}^{i}(\xi) = \sum_{x \in \mb{Z}}\ct_{x}(p_{n}^{i})^{2}e^{i\xi x},\quad 
Q_{n}^{i}(\xi) = \sum_{x \in \mb{Z}}\ct_{x}(q_{n}^{i})^{2}e^{i\xi x},\\
R_{n}^{i}(\xi) = \sum_{x \in \mb{Z}}\ct_{x}(p_{n}^{i})\ct_{x}(q_{n}^{i})e^{i\xi x}. 
\end{gathered}
\end{equation}
\end{lem}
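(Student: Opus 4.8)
The statement has two parts: the closed form \eqref{df2} for the distribution $q_{n}(\psi;x)$, and the resulting expression \eqref{cha1} for its characteristic function. Both are direct consequences of the coefficient formula \eqref{powerC}, and my plan is to obtain them by a straightforward expansion of squared moduli, with the one genuinely useful observation being the reality of the coefficients of the Laurent polynomials $p_{n}^{i}(z)$, $q_{n}^{i}(z)$.

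First I would fix $x \in \mb{Z}$ and, writing $A_{i}=\ct_{x}(p_{n}^{i})$ and $B_{i}=\ct_{x}(q_{n}^{i})$ for $i=1,2$, record from \eqref{cefP} that all the $A_{i}, B_{i}$ are \emph{real} (the Chebyshev polynomials have real coefficients and $s,t \in \mb{R}$). Then \eqref{powerC} gives $\ispa{U^{n}\Psi,e_{i}^{x}}=\psi_{1}A_{i}+\psi_{2}B_{i}$, and I would expand
\[
|\ispa{U^{n}\Psi,e_{i}^{x}}|^{2}=|\psi_{1}|^{2}A_{i}^{2}+|\psi_{2}|^{2}B_{i}^{2}+2\re(\psi_{1}\ol{\psi}_{2})A_{i}B_{i},
\]
where reality of $A_{i}, B_{i}$ is exactly what merges the two conjugate cross terms $\psi_{1}\ol{\psi}_{2}A_{i}B_{i}$ and $\ol{\psi}_{1}\psi_{2}A_{i}B_{i}$ into the single real coefficient $2\re(\psi_{1}\ol{\psi}_{2})$. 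This is the only conceptual point in the argument.

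Summing this identity over $i=1,2$ and using $q_{n}(\psi;x)=|\ispa{U^{n}\Psi,e_{1}^{x}}|^{2}+|\ispa{U^{n}\Psi,e_{2}^{x}}|^{2}$ from \eqref{df1} produces \eqref{df2}, after grouping the coefficients of $|\psi_{1}|^{2}$, $|\psi_{2}|^{2}$, and $2\re(\psi_{1}\ol{\psi}_{2})$. For the characteristic function I would then carry out the purely linear step of multiplying \eqref{df2} by $e^{i\xi x}$ and summing over $x \in \mb{Z}$; distributing the sum through the three bracketed groups reproduces \eqref{cha1} directly from the definitions of $P_{n}^{i}(\xi)$, $Q_{n}^{i}(\xi)$, $R_{n}^{i}(\xi)$. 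I do not anticipate any real obstacle here: each $p_{n}^{i}, q_{n}^{i}$ is a Laurent polynomial with only finitely many nonzero coefficients, so every sum over $x$ is finite and convergence is automatic, and the whole computation reduces to bookkeeping of real quadratic terms once the reality of the coefficients has been invoked.
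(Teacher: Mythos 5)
Your proposal is correct and follows exactly the route the paper takes: the paper derives the lemma from \eqref{powerC} with the single remark that the Laurent polynomials $p_{n}^{i}$, $q_{n}^{i}$ have real coefficients, which is precisely the point you isolate to merge the conjugate cross terms into $2\re(\psi_{1}\ol{\psi}_{2})$. The remaining steps (summing over $i$ via \eqref{df1} and summing over $x$ against $e^{i\xi x}$) are the same finite bookkeeping in both arguments.
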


\section{Proof of the weak limit theorem}\label{PROOF}

Let $p(z)$, $q(z)$ be two Laurent polynomials, which are, in our notation, written as 
\[
p(z)=\sum_{x \in \mb{Z}}\ct_{x}(p)z^{x},\quad q(z)=\sum_{x \in \mb{Z}}\ct_{x}(q)z^{x}. 
\]
Then, we have the following convolution identity. 
\begin{equation}\label{cross1}
\sum_{x \in \mb{Z}}\ct_{x}(p)\ct_{x}(q)w^{x}=\int_{|z|=1} p(wz)q(z^{-1})\,\frac{dz}{2\pi i z}. 
\end{equation}
We use the formula $\eqref{cross1}$ to deduce the asymptotic properties of $P_{n}^{i}(\xi/n)$, $Q_{n}^{i}(\xi/n)$ and $R_{n}^{i}(\xi/n)$ as $n$ tends to infinity. 
To compute it, we need the following lemmas. 
\begin{lem}\label{taylor1}
For $\theta \in \mb{R}$, we set $f(\theta)={\rm Cos}^{-1}(s\cos \theta)$, where $0<s<1$. We fix $\xi \in \mb{R}$. 
Then, as $n \to \infty$, we have the following. 
\[
\begin{split}
T_{n}(s \cos(\theta +\xi/n)) & =T_{n}(s \cos \theta)  \cos (\xi f'(\theta)) \\
&\hspace{30pt} -\sin (n f(\theta)) \sin (\xi f'(\theta)) +O(1/n),\\
U_{n-1}(s \cos (\theta +\xi/n)) & =U_{n-1}(s \cos \theta) \cos (\xi f'(\theta)) \\
& \hspace{30pt} +T_{n}(s \cos \theta) \frac{\sin (\xi f'(\theta))}{\sin f(\theta)} +O(1/n),  
\end{split}
\]
where $O(1/n)$ is uniform in $\theta$. 
\end{lem}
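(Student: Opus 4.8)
The plan is to establish the two asymptotic expansions by a direct Taylor expansion in the variable $\theta$ around the point $\theta$, keeping track of the fact that the shift $\xi/n$ is small while the degree $n$ is large, so that the product $n \cdot (\xi/n) = \xi$ stays finite. The natural device is to pass to the trigonometric parametrization defining the Chebyshev polynomials. Writing $f(\theta)=\arccos(s\cos\theta)$, so that $\cos f(\theta)=s\cos\theta$, we have by definition
\[
T_{n}(s\cos\theta)=\cos(nf(\theta)),\qquad U_{n-1}(s\cos\theta)=\frac{\sin(nf(\theta))}{\sin f(\theta)}.
\]
Thus the whole problem reduces to understanding $\cos(nf(\theta+\xi/n))$ and $\sin(nf(\theta+\xi/n))/\sin f(\theta+\xi/n)$ as $n\to\infty$.

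First I would Taylor expand the phase: $f(\theta+\xi/n)=f(\theta)+\frac{\xi}{n}f'(\theta)+\frac{1}{2}\frac{\xi^{2}}{n^{2}}f''(\theta_{n})$ for some intermediate point $\theta_{n}$, using that $f$ is smooth (this requires $0<s<1$ so that $s\cos\theta\in(-1,1)$ and $\arccos$ is differentiable with $\sin f(\theta)=\sqrt{1-s^{2}\cos^{2}\theta}>0$, staying bounded away from $0$). Multiplying by $n$ gives
\[
nf(\theta+\xi/n)=nf(\theta)+\xi f'(\theta)+O(1/n),
\]
where the remainder is $O(1/n)$ uniformly in $\theta$ because $f''$ is bounded on $\mb{R}$. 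Then I apply the angle-addition formulas for cosine and sine to the decomposition $nf(\theta+\xi/n)=\big(nf(\theta)\big)+\big(\xi f'(\theta)+O(1/n)\big)$, and expand $\cos$ and $\sin$ of the $O(1/n)$ correction to first order. This immediately yields
\[
\cos\big(nf(\theta+\xi/n)\big)=\cos(nf(\theta))\cos(\xi f'(\theta))-\sin(nf(\theta))\sin(\xi f'(\theta))+O(1/n),
\]
which, upon identifying $\cos(nf(\theta))=T_{n}(s\cos\theta)$, is exactly the first claimed expansion. An entirely parallel computation handles $\sin(nf(\theta+\xi/n))$, giving a combination of $\sin(nf(\theta))$ and $\cos(nf(\theta))=T_{n}(s\cos\theta)$.

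The one extra wrinkle for the second identity is the denominator $\sin f(\theta+\xi/n)$ in $U_{n-1}$. Here I would expand $\sin f(\theta+\xi/n)=\sin f(\theta)+O(1/n)$ and use that $\sin f(\theta)$ is bounded below by $t=\sqrt{1-s^{2}}>0$ uniformly in $\theta$, so that $1/\sin f(\theta+\xi/n)=1/\sin f(\theta)+O(1/n)$. Combining this with the numerator expansion and collecting the $\sin(nf(\theta))/\sin f(\theta)=U_{n-1}(s\cos\theta)$ term and the $T_{n}(s\cos\theta)\sin(\xi f'(\theta))/\sin f(\theta)$ term produces the stated formula.

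The main obstacle, and the point that deserves genuine care rather than routine manipulation, is the \emph{uniformity} of the $O(1/n)$ error in $\theta$. The trigonometric prefactors $\cos(nf(\theta))$ and $\sin(nf(\theta))$ are uniformly bounded by $1$, which is what makes the leading terms harmless; but one must verify that $f'(\theta)$, $f''(\theta)$, and $1/\sin f(\theta)$ are all bounded uniformly on $\mb{R}$, which is exactly where the strict inequality $0<s<1$ enters (it keeps $s\cos\theta$ away from $\pm 1$). Granting these uniform bounds, every remainder is controlled by $C\xi^{2}/n$ with $C$ independent of $\theta$, and the expansions follow.
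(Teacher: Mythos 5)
Your proposal is correct and is essentially the paper's own proof: the paper likewise notes that $T_{n}(s\cos(\theta+\xi/n))=\cos(nf(\theta+\xi/n))$ together with the bounds ${\rm Cos}^{-1}(s)\leq f(\theta)\leq {\rm Cos}^{-1}(-s)$ (which keep $\sin f(\theta)$ uniformly away from $0$), and then invokes the Taylor expansion of $f$. You have simply written out the details --- the expansion $nf(\theta+\xi/n)=nf(\theta)+\xi f'(\theta)+O(1/n)$, the angle-addition formulas, and the uniform bounds on $f'$, $f''$, $1/\sin f$ --- that the paper compresses into one sentence.
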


\begin{proof}
Noting that $T_{n}(s \cos (\theta +\xi/n))=\cos (n f(\theta +\xi/n))$ and $0<{\rm Cos}^{-1}(s) \leq f(\theta) \leq {\rm Cos}^{-1}(-s)<\pi$, 
a simple application of the Taylor expansion shows the lemma. 
\end{proof}

\begin{lem}\label{asym1}
Let $k \in \mb{Z}$ and $\xi \in \mb{R}$. We set
\[
\begin{split}
A_{n,k}(\xi) & = \int_{|z|=1}z^{k}T_{n}(s(e^{i\xi/n}z+e^{-i\xi/n}z^{-1})/2)T_{n}(s(z+z^{-1})/2)\,\frac{dz}{2\pi i z},\\
B_{n,k}(\xi) & = \int_{|z|=1}z^{k}T_{n}(s(e^{i\xi/n}z+e^{-i\xi/n}z^{-1})/2)U_{n-1}(s(z+z^{-1})/2)\,\frac{dz}{2\pi i z}, \\
C_{n,k}(\xi) & = \int_{|z|=1}z^{k}U_{n-1}(s(e^{i\xi/n}z+e^{-i\xi/n}z^{-1})/2) T_{n}(s(z+z^{-1})/2)\,\frac{dz}{2\pi i z}, \\
D_{n,k}(\xi) & = \int_{|z|=1}z^{k}U_{n-1}(s(e^{i\xi/n}z+e^{-i\xi/n}z^{-1})/2) U_{n-1}(s(z+z^{-1})/2)\,\frac{dz}{2\pi i z}. 
\end{split}
\]
Then, we have the following. 
\[
\begin{split}
\lim_{n \to \infty}A_{n,k}(\xi) & = \frac{1+(-1)^{k}}{4\pi} 
\int_{-s}^{s}e_{k}(x) \cos
\left(
\frac{\xi\sqrt{s^{2}-x^{2}}}{\sqrt{1-x^{2}}}
\right)
\frac{dx}{\sqrt{s^{2}-x^{2}}}, \\
\lim_{n \to \infty}B_{n,k}(\xi) & = -\lim_{n \to \infty}C_{n,k}(\xi) \\
& = -\frac{1-(-1)^{k}}{4\pi}\int_{-s}^{s}
e_{k}(x) \sin 
\left(
\frac{\xi \sqrt{s^{2}-x^{2}}}{\sqrt{1-x^{2}}}
\right)\frac{dx}{\sqrt{(1-x^{2})(s^{2}-x^{2})}},\\
\lim_{n \to \infty}D_{n,k}(\xi) & = 
\frac{1+(-1)^{k}}{4\pi}\int_{-s}^{s}
e_{k}(x) \cos
\left(
\frac{\xi \sqrt{s^{2}-x^{2}}}{\sqrt{1-x^{2}}} 
\right)
\frac{dx}{(1-x^{2})\sqrt{s^{2}-x^{2}}}.
\end{split}
\]
where we set $e_{k}(x)=\exp[i k {\rm Cos}^{-1}(x/s)]$. 
\end{lem}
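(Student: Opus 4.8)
The plan is to parametrize the contour $|z|=1$ by $z=e^{i\theta}$, $\theta\in[0,2\pi]$, so that $\frac{dz}{2\pi i z}=\frac{d\theta}{2\pi}$ and each of the four integrals becomes an angular integral $\frac{1}{2\pi}\int_0^{2\pi}e^{ik\theta}\,F_n(\theta+\xi/n)\,G_n(\theta)\,d\theta$, where $F_n,G_n$ are $T_n(s\cos\,\cdot\,)$ or $U_{n-1}(s\cos\,\cdot\,)$ according to the case. Writing $f(\theta)=\arccos(s\cos\theta)$ as in Lemma \ref{taylor1}, the defining relations for the Chebyshev polynomials give $T_n(s\cos\theta)=\cos(nf(\theta))$ and $U_{n-1}(s\cos\theta)=\sin(nf(\theta))/\sin f(\theta)$, where $\sin f(\theta)=\sqrt{1-s^2\cos^2\theta}\ge\sqrt{1-s^2}>0$ because $0<s<1$. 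This lower bound is what keeps every weight bounded, and it is the reason the case $s<1$ behaves well.

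Next I would substitute the two expansions of Lemma \ref{taylor1} for the shifted factor $F_n(\theta+\xi/n)$ and multiply out. The uniform $O(1/n)$ error, times the bounded factor $G_n(\theta)$, integrates to $O(1/n)$. The surviving main terms are products of the slowly varying functions $\cos(\xi f')$, $\sin(\xi f')$, $1/\sin f$, $1/\sin^2 f$ with one of the rapidly oscillating quantities $\cos^2(nf)$, $\sin^2(nf)$, $\sin(nf)\cos(nf)$. Using $\cos^2=\frac12(1+\cos 2nf)$, $\sin^2=\frac12(1-\cos 2nf)$ and $\sin\cos=\frac12\sin 2nf$, I split each into its mean value ($\frac12$, $\frac12$, $0$ respectively) plus a term carrying the oscillatory factor $e^{\pm 2inf(\theta)}$.

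The main obstacle is proving that every oscillatory integral $\int_0^{2\pi}h(\theta)e^{\pm 2inf(\theta)}\,d\theta\to 0$. This is not a direct Riemann--Lebesgue statement, since the phase $f$ has stationary points at $\theta=0,\pi$, where $f'(\theta)=s\sin\theta/\sqrt{1-s^2\cos^2\theta}$ vanishes. I would dispose of it by excising small neighborhoods of these two points, on which the van der Corput second-derivative estimate yields a contribution $O(n^{-1/2})$, and on the complement, where $f'\neq 0$, changing variables to $u=f(\theta)$ and invoking Riemann--Lebesgue. Because $\sin f$ is bounded below, all the weights $h$ are smooth and bounded, so this gives the required decay and only the mean-value contributions remain: for $A,D$ an integral of $e^{ik\theta}$ against a smooth function of $\cos\theta$ times $\cos(\xi f')$, and for $B,C$ the same with $\sin(\xi f')$. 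In particular both $B$ and $C$ reduce to $\pm\frac{1}{4\pi}\int_0^{2\pi}e^{ik\theta}\frac{\sin(\xi f')}{\sin f}\,d\theta$ with opposite signs, which gives $\lim C_{n,k}=-\lim B_{n,k}$ at once.

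Finally I would substitute $x=s\cos\theta$ on $[0,\pi]$, under which $\arccos(x/s)=\theta$, $f'(\theta)=\sqrt{s^2-x^2}/\sqrt{1-x^2}$, $\sin f=\sqrt{1-x^2}$, and $d\theta=-dx/\sqrt{s^2-x^2}$, matching the stated kernels and Jacobian $1/\sqrt{s^2-x^2}$. The parity factors $1\pm(-1)^k$ come from folding $[\pi,2\pi]$ onto $[0,\pi]$ via $\theta\mapsto 2\pi-\theta$, which sends $e^{ik\theta}\mapsto e^{-ik\theta}$ and $f'\mapsto -f'$: thus $\cos(\xi f')$ is preserved (cases $A,D$) while $\sin(\xi f')$ changes sign (cases $B,C$), producing $e^{ik\theta}+e^{-ik\theta}$ or $e^{ik\theta}-e^{-ik\theta}$ under the integral. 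Combining this with the elementary symmetry $\int_{-s}^{s}e_{-k}(x)g(x)\,dx=(-1)^k\int_{-s}^{s}e_{k}(x)g(x)\,dx$, valid for even $g$ because $e_k(-x)=(-1)^k e_{-k}(x)$, rewrites the $\cos(k\theta)$-type integrals in the stated $e_k(x)$-form with factor $1+(-1)^k$ and the $\sin(k\theta)$-type integrals with factor $1-(-1)^k$, completing the identification of all four limits.
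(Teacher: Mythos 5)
Your proposal is correct, and it follows the paper's skeleton --- circle parametrization, insertion of the Lemma~\ref{taylor1} expansions, separation of the products into mean and oscillatory parts, and the final substitution $x=s\cos\theta$ --- but it diverges at the two technical pivots. First, the parity factors: the paper extracts $1\pm(-1)^k$ at the very start by folding $[0,2\pi]$ onto $[0,\pi]$ through $\theta\mapsto\theta+\pi$, using $T_n(-u)=(-1)^nT_n(u)$ and $U_{n-1}(-u)=(-1)^{n-1}U_{n-1}(u)$, so that $A,D$ acquire $1+(-1)^k$ and $B,C$ acquire $1-(-1)^k$ before any asymptotics are done; you instead recover the same factors at the end from the reflection $\theta\mapsto 2\pi-\theta$ together with the symmetry $e_k(-x)=(-1)^k e_{-k}(x)$ --- both are valid. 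Second, and more substantively, the disposal of the oscillatory terms: the paper performs the change of variables $\varphi=f(\theta)$ \emph{before} appealing to Riemann--Lebesgue, which turns the phase $2nf(\theta)$ into the linear phase $2n\varphi$; the stationary points of $f$ at $\theta=0,\pi$ then reappear only as the endpoint singularity $1/\sqrt{s^2-\cos^2\varphi}$ of the Jacobian, which is integrable, so the $L^1$ form of Riemann--Lebesgue finishes the proof with no stationary-phase analysis at all. Your route confronts the stationary points head-on with excision plus van der Corput; this works (in fact van der Corput is more than you need --- excision plus the uniform bounds $|T_n(s\cos\theta)|\le 1$, $|U_{n-1}(s\cos\theta)|\le(1-s^2)^{-1/2}$ already make the excised pieces $O(\epsilon)$, and Riemann--Lebesgue after your change of variables $u=f(\theta)$ handles the rest), but it is exactly the step that the paper's substitution was designed to bypass. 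The trade-off: the paper's argument is shorter and entirely elementary, while yours is the standard oscillatory-integral template and would survive in settings where no phase-linearizing substitution is available.
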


\begin{proof}
Since $T_{n}(-x)=(-1)^{n}T_{n}(x)$, $U_{n-1}(-x)=(-1)^{n-1}U_{n-1}(x)$, we see 
\[
\begin{split}
A_{n,k}(\xi) & = \frac{1+(-1)^{k}}{2\pi} \int_{0}^{\pi}e^{ik \theta}T_{n}(s \cos (\theta +\xi/n))T_{n}(s \cos \theta)\,d\theta, \\
B_{n,k}(\xi) & = \frac{1-(-1)^{k}}{2\pi} \int_{0}^{\pi}e^{ik\theta} T_{n}(s \cos (\theta +\xi/n)) U_{n-1}(s \cos \theta)\,d\theta,\\
C_{n,k}(\xi) & = \frac{1-(-1)^{k}}{2\pi} \int_{0}^{\pi}e^{ik\theta} U_{n-1}(s \cos (\theta +\xi/n)) T_{n}(s \cos \theta)\,d\theta,\\
D_{n,k}(\xi) & = \frac{1+(-1)^{k}}{2\pi} \int_{0}^{\pi}e^{ik\theta} U_{n-1}(s \cos (\theta +\xi/n)) U_{n-1}(s \cos \theta)\,d\theta. 
\end{split}
\]
Note that we have 
\[
\int_{0}^{\pi}|T_{n}(s \cos \theta)| \,d\theta \leq \pi,\quad \int_{0}^{\pi}|U_{n-1}(s \cos \theta)|\,d\theta \leq \frac{\pi}{\sqrt{1-s^{2}}}. 
\]
Thus, by Lemma $\ref{taylor1}$ and the change of the variable, $\varphi=f(\theta)$, where $f(\theta)$ is the function introduced in Lemma $\ref{taylor1}$, we have 
\[
\begin{split}
A_{n,k}(\xi) 
& = \frac{1+(-1)^{k}}{2\pi}
\left[
\int_{{\rm Cos}^{-1}(s)}^{{\rm Cos}^{-1}(-s)}
e_{k}(\cos \varphi)\cos^{2}(n\varphi) \right. \\
& \hspace{35mm} \left. \times \cos
\left(
\frac{\xi\sqrt{s^{2}-\cos^{2}\varphi}}{\sin \varphi}
\right)\frac{\sin \varphi}{\sqrt{s^{2}-\cos^{2}\varphi}}\,d\varphi
\right. \\
&\left. - 
\int_{{\rm Cos}^{-1}(s)}^{{\rm Cos}^{-1}(-s)}
e_{k}(\cos \varphi)\sin (n\varphi)\cos (n\varphi) \right. \\
& \hspace{35mm} \left. \times \sin 
\left(
\frac{\xi\sqrt{s^{2}-\cos^{2}\varphi}}{\sin \varphi}
\right)\frac{\sin \varphi}{\sqrt{s^{2}-\cos^{2}\varphi}}\,d\varphi+O(1/n)
\right].  
\end{split}
\]
Since $\frac{1}{\sqrt{s^{2}-\cos^{2}\varphi}}$ is an $L^{1}$-function on the integration interval, the Riemann-Lebesgue lemma gives 
\[
\begin{split}
\lim_{n \to \infty}A_{n,k}(\xi) & = \frac{1+(-1)^{k}}{4\pi}
\int_{{\rm Cos}^{-1}(s)}^{{\rm Cos}^{-1}(-s)}
e_{k}(\cos \varphi) \\
& \hspace{35mm} \times \cos
\left(
\frac{\xi\sqrt{s^{2}-\cos^{2}\varphi}}{\sin \varphi}
\right)\frac{\sin \varphi}{\sqrt{s^{2}-\cos^{2}\varphi}}\,d\varphi \\
& = \frac{1+(-1)^{k}}{4\pi}
\int_{-s}^{s}
e_{k}(x)
\cos
\left(
\frac{\xi\sqrt{s^{2}-x^{2}}}{\sqrt{1-x^{2}}}
\right)\frac{dx}{\sqrt{s^{2}-x^{2}}}.
\end{split}
\]
Formulas for $B_{n,k}(\xi)$, $C_{n,k}(\xi)$ and $D_{n,k}(\xi)$ can be obtained in a similar way. 
\end{proof}

Now, it is rather easy to prove Theorem $\ref{WLT}$. 

\vspace{10pt}

\noindent{\bf Proof of Theorem $\ref{WLT}$.} \hspace{3pt} A direct computation using $\eqref{cross1}$ and Lemma $\ref{asym1}$ leads us to the following. 
\[
\begin{split}
P_{n}^{1}(\xi/n) & = A_{n,0}(\xi)-\frac{s}{2}\left(
B_{n,1}(\xi) -B_{n,-1}(\xi)\right)+\frac{s}{2}\left(
e^{i\xi/n}C_{n,1}(\xi) -e^{-i\xi/n}C_{n,-1}(\xi)
\right) \\
& \hspace{20pt} -\frac{s^{2}}{4}
\left(
e^{i\xi/n}D_{n,2}(\xi) -2\cos(\xi/n)D_{n,0}(\xi)+e^{-i\xi/n}D_{n,-2}(\xi)
\right) \\
& \to \frac{1}{2\pi} \int_{-s}^{s} 
\cos \left(
\frac{\xi \sqrt{s^{2}-x^{2}}}{\sqrt{1-x^{2}}}
\right) \frac{dx}{\sqrt{s^{2}-x^{2}}} \\
& \hspace{20pt}+\frac{i}{\pi} \int_{-s}^{s}
\sin \left(
\frac{\xi \sqrt{s^{2}-x^{2}}}{\sqrt{1-x^{2}}}
\right) \frac{dx}{\sqrt{1-x^{2}}} \\
& \hspace{20pt} +\frac{1}{2\pi} \int_{-s}^{s}
\cos\left(
\frac{\xi \sqrt{s^{2}-x^{2}}}{\sqrt{1-x^{2}}}
\right)\frac{\sqrt{s^{2}-x^{2}}}{1-x^{2}}\,dx \qquad (n \to \infty). 
\end{split}
\]
Note that the integrand are all even function. Changing the variable $y=\sqrt{\frac{s^{2}-x^{2}}{1-x^{2}}}$, we have 
\[
\lim_{n \to \infty}P_{n}^{1}(\xi/n)=
\frac{t}{\pi}
\int_{0}^{s}\{(1+y^{2})\cos (\xi y)+2i y \sin (\xi y)\} \frac{dy}{(1-y^{2})\sqrt{s^{2}-y^{2}}}.
\]
Similar computation shows that
\[
\begin{split}
\lim_{n \to \infty}Q_{n}^{1}(\xi/n) & = \lim_{n \to \infty}P_{n}^{2}(\xi/n) = \frac{t}{\pi}\int_{0}^{s}\cos (\xi y) \frac{dy}{\sqrt{s^{2}-y^{2}}}, \\
\lim_{n \to \infty}Q_{n}^{2}(\xi/n) & = \frac{t}{\pi} \int_{0}^{s} \{(1+y^{2})\cos (\xi y) -2iy \sin (\xi y)\} \\
& \hspace{35mm} \times \frac{dy}{(1-y^{2})\sqrt{s^{2}-y^{2}}}, \\
\lim_{n \to \infty}(R_{n}^{1}(\xi/n)+R_{n}^{2}(\xi/n)) & = \frac{2t^{2} i}{\pi s} \int_{0}^{s} y \sin (\xi y) \frac{dy}{(1-y^{2})\sqrt{s^{2}-y^{2}}}. 
\end{split}
\]
Noting that integrals of odd functions on the interval $(-s,s)$ vanish, 
we have, by Lemma $\ref{EXP1}$, 
\[
\lim_{n \to \infty}E_{n}(\xi/n)=\frac{t}{\pi} \int_{-s}^{s}e^{i\xi y}
\frac{1+\lambda(\psi;s,t)y}{(1-y^{2})\sqrt{s^{2}-y^{2}}}\,dy, 
\]
which completes the proof of Theorem $\ref{WLT}$. \hfill$\square$

\section*{Acknowledgment} 
The author would like to thank the referee for invaluable comments.

\end{document}